\title[Query complexity for strongly log-concave in one dimension]{The query complexity of sampling from strongly log-concave distributions in one dimension}
\newcommand{\ttt}{\kappa^{-\frac{1}{2}}}
\DeclareMathOperator{\unif}{unif}
\def\set@curr@file#1{\def\@curr@file{#1}} %temp workaround for 2019 latex release
\begin{document}

\maketitle

\begin{abstract}
    We establish the first tight lower bound of $\Omega(\log\log\kappa)$ on the query complexity of sampling from the class of strongly log-concave and log-smooth distributions  with condition number $\kappa$ in one dimension. Whereas existing guarantees for MCMC-based algorithms scale polynomially in $\kappa$, we introduce a novel algorithm based on rejection sampling that closes this doubly exponential gap.
%     Motivated by the oracle lower bounds initiated by Nemirovsky and Yudin for optimization, w
    
%     This paper establishes the first lower bound for the complexity of sampling from a structured distribution in terms of the number of queries to a natural first order oracle commonly employed in such tasks. More precisely, in one dimension, we show that sampling from a strongly log-concave and log-smooth distribution with condition number $\kappa$ requires $\Omega(\log\log\kappa)$ calls to any local oracle. Moreover, this lower bound us sharp and can be achieved using a novel rejection sampling algorithm which improves on state-of-the-art subroutines such as the ones appear in the Hit-and-Run algorithms. 
    
%   This paper establishes the first lower complexity bound for the task of sampling from a strongly log-concave and log-smooth distribution in one dimension. This lower bound of $\Omega(\log\log\kappa)$, where $\kappa$ is the condition number, is achieved via a novel rejection sampling algorithm. We also give an application of our algorithm to the implementation of the Hit-and-Run sampling method.
\end{abstract}

\section{Introduction}

The task of sampling from a target probability distribution known up to a normalizing constant is of fundamental importance in fields such as Bayesian statistics, randomized algorithms, and online learning. Recently, there has been a resurgence of interest in sampling and its interplay with the more well-developed field of optimization. On the one hand, the extensive optimization toolkit has inspired the development of novel sampling algorithms~\citep{bernton2018jko, wibisono2018samplingoptimization, wibisono2019proximal, chewietal2020mirrorlangevin, dingetal2020coordinatelangevin, salimkorbaluise2020wassproximal, maetal2021nesterovmcmc}; on the other hand, the theory of optimization has motivated researchers to provide quantitative and non-asymptotic convergence guarantees for sampling methods, which depend on parameters that describe the problem complexity such as the condition number and the dimension~\citep{durmus2017nonasymptotic, dalalyankaragulyan2019userfriendly, chewietal2020mala}.

Conspicuously absent from this interplay, however, are lower complexity bounds for sampling, in analogy to the oracle lower bounds initiated in the seminal work by Nemirovsky and Yudin for optimization~\citep{nemirovskyyudin}. Besides charting the fundamental limits of optimization, such lower bounds have been instrumental in the development of faster algorithms, most notably Nesterov's acceleration, which was ``found mainly because the investigating of complexity enforced to believe that such a method should exist''~\citet[Ch.\ 10]{Nem94}.
% allow us to identify algorithms which are \emph{minimax optimal}, in the sense that their performance cannot be improved uniformly over a given problem class. Moreover, lower bounds illuminate the fundamental difficulty of the problem.

A canonical structured class of distributions is that of strongly log-concave and log-smooth distributions on $\R^d$, i.e., the class of distributions with a density $p \propto \exp(-V)$, where the potential $V : \R^d\to\R$ is twice continuously differentiable, $\alpha$-strongly convex, and $\beta$-smooth.
The relevant parameters of this class are the dimension $d$, as well as the \emph{condition number} $\kappa := \beta/\alpha$, and we seek to understand the number of queries to $V$ (and its derivatives) necessary to generate a sample close in total variation distance to $p$. We call a solution to this problem a \emph{general sampling lower bound}.

\paragraph{Related works.} Despite several attempts at establishing query complexity lower bounds for sampling, we are not aware of a general sampling lower bound. Whereas sampling upper bounds are derived using techniques that are close to those employed in optimization~\citep{dalalyan2014theoretical, durmusmajewski2019lmcconvex}, it is unclear how to use lower bound techniques for optimization~\citep{nesterov2013introductory} to derive general sampling lower bounds.
Note that sampling upper bounds typically assume that the minimizer of $V$ is known a priori; thus, a direct reduction of the sampling task to apply existing optimization lower bounds would likely capture the complexity of finding the mode of $V$ rather than the intrinsic difficulty of the sampling task itself. In lieu of a direct reduction, it is possible to envision, at least in principle, an approach which adapts the optimization lower bound constructions to the sampling setting, but we are not aware of any successful results in this direction.

Another family of approaches is based on information-theoretic ideas which have been highly successful for developing a minimax theory of statistics~\citep{lecam1986asymptotic, lecamyang2000asymptotics, tsybakov2009nonparametric}; however, prior works applying these ideas have
largely focused on various adjacent questions which do not imply a lower bound for the sampling task itself. A notable example is the estimation of the normalizing constant of a strongly log-concave distribution, for which a lower bound was established in~\cite{ge2020estimating}.
However, this lower bound does not yield a general sampling lower bound; in fact, the two problems differ in difficulty. Indeed, the randomized midpoint discretization of the underdamped Langevin dynamics~\citep{shen2019randomized} obtains samples in $\mc O(d^{1/3})$ queries, whereas the lower bound for estimating the normalizing constant in~\cite{ge2020estimating} grows as $\tilde\Omega(d)$.
Another example is the paper~\cite{chatterjibartlettlong2020oraclesampling}, which studies sampling with access to stochastic gradient queries. However, the resulting lower bound arises primarily out of the need to overcome the noise in the gradient queries, and it again does not yield sampling lower bounds for our setting of precise gradient queries.

To circumvent the difficulties in establishing general sampling lower bounds, various works have focused on establishing lower bounds for specific and popular algorithms such as the Underdamped Langevin Algorithm (ULA)~\citep{cao2020complexity} and the Metropolis-Adjusted Langevin Algorithm (MALA)~\citep{chewietal2020mala}. In particular, the last paper establishes that the exact dimension dependence of the query complexity for MALA over the class of strongly log-concave and log-smooth distributions is $\Theta(\sqrt{d})$, although the optimal dependence on $\kappa$ is unknown.

\paragraph{A lower complexity bound in one dimension.}
Recall that for convex optimization, there are two relevant regimes~\cite[see, e.g.,][]{bubeck2015convex}: (1) the low-dimension regime, in which algorithms such as the cutting plane method achieve the rate $\mc O(d\log(1/\varepsilon))$ (where $\varepsilon$ is the accuracy parameter), and (2) the high-dimensional regime, in which algorithms such as gradient descent achieve dimension-free rates at the cost of inverse polynomial dependency on the accuracy. In this paper, we study the low-dimensional regime for sampling; in particular, we consider $d=1$.

We prove that for the class of $\alpha$-strongly log-concave and $\beta$-log-smooth distributions in one dimension (with mode at $0$), any algorithm, which can produce a sample that is at total variation distance at most $\frac{1}{64}$ from the target distribution $p$ (uniformly over $p$ belonging to the class), must make at least $\Omega(\log\log\kappa)$ queries to $V$ or any of its derivatives. To our knowledge, this is the first lower complexity bound for this problem class.

\paragraph{Achievability of the lower bound.} The lower bound of $\Omega(\log \log \kappa)$ is surprisingly small, and existing guarantees for standard algorithms such as the Langevin algorithm (or its variants), the Metropolis-Adjusted Langevin Algorithm, or Hamiltonian Monte Carlo, all have a dependence that scales polynomially with the condition number $\kappa$ \cite[see for instance the comparison in][]{shen2019randomized}.

To provide an algorithm which matches the lower bound, we return to the fundamental idea of rejection sampling, developed by Stan Ulam and John von Neumann \citep{vonNeumann1951, Ulam}. We develop an algorithm which uses $\mc O(\log\log\kappa)$ queries in order to build a proposal distribution. Once the proposal distribution is constructed, new samples which are $\varepsilon$-close to $p$ in total variation distance can be generated using $\mc O(\log(1/\varepsilon))$ additional queries per sample.

Although our algorithm is tailored to distributions in one dimension, the task of sampling from a one-dimensional log-concave distribution is an important subroutine for higher dimensional algorithms, such as the Hit-and-Run algorithm. We describe the application of our algorithm to Hit-and-Run in Section~\ref{scn:hit_and_run}.

\section{Lower bound}

We begin by formally defining the class of strongly log-concave and log-smooth distributions in one dimension, which is the focus of this paper.

\begin{definition}\label{defn:strongly_lc}
The class of univariate \emph{$\alpha$-strongly log-concave and $\beta$-log-smooth} distributions, for constants $0 < \alpha \le \beta$, is the class of continuous distributions $p$ supported on $\R$, whose density is of the form $p(x) = \exp(-V(x))$, for a potential  function $V : \R \to\R \cup\{\infty\}$ which is twice continuously differentiable and satisfies
\begin{align}\label{eq:smooth_strong_cvx}
    \alpha \le V''(x) \le \beta \, , \qquad \forall x\in \R\, .
\end{align}
In addition, we always assume\footnote{This localization assumption is common in the sampling literature; without some knowledge of the mode (e.g.\ that the mode is contained in an interval) it is impossible to even find the mode in the query model.} that the mode of the distribution is at $0$, or equivalently $V'(0) = 0$.
\end{definition}

We study the \emph{query complexity} of sampling from this class.
Formally, suppose that the target distribution is $p = \exp(-V)$. The sampling algorithm is allowed to make queries to the following oracle: given a point $x\in\R$, the oracle returns some or all of (1) the evaluation of the potential $V(x) + C$ up to a constant $C$, which is unknown to the algorithm but does not change from query to query; (2) the evaluation of the gradient $V'(x)$; or (3) the evaluation of the Hessian $V''(x)$. Depending on what information the oracle returns, it may be described as providing $0^{\rm th}$-, $1^{\rm st}$-, or $2^{\rm nd}$-order information. For instance, the Langevin algorithm uses $1^{\rm st}$-order information, whereas the Metropolis-Adjusted Langevin Algorithm uses both $0^{\rm th}$-order and $1^{\rm st}$-order information. Our lower bound will in fact apply to the strongest of these oracles, namely the one that returns all three pieces of information.

We now state our lower bound.

\begin{theorem}\label{thm:general_lower}
    Consider the class $\mc P$ of univariate $\alpha$-strongly log-concave and $\beta$-log-smooth distributions as defined in~\autoref{defn:strongly_lc}, and let $\kappa := \beta / \alpha$ denote the condition number. Suppose that an algorithm satisfies the following guarantee: for any $p\in \mc P$, the algorithm makes $n$ queries to the oracle providing $0^{\rm th}$-, $1^{\rm st}$-, and $2^{\rm nd}$-order information for $p$, and outputs a random variable whose law is at most $\frac{1}{64}$ away from $p$ in total variation distance.
    Then, $n \gtrsim \log\log \kappa$.
\end{theorem}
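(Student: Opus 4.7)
The plan is to reduce the sampling task to a multi-hypothesis testing problem: construct a family of $K \asymp \log_2 \kappa$ ``hard'' distributions that are pairwise far in total variation and such that the oracle leaks only $O(1)$ bits of information per query, then apply Fano's inequality (or an equivalent adversarial-oracle argument). Since distinguishing $K = \Theta(\log \kappa)$ distributions requires $\log_2 K = \Theta(\log\log \kappa)$ bits, this will yield the desired lower bound.

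\textbf{Construction of hard instances.} I would build distributions $p_i = e^{-V_i}$ for $i = 1, \ldots, K$ with $K \asymp \log_2 \kappa$, each in the class $\mc P$ and with mode at $0$, aiming at two properties: (a)~$\mathrm{TV}(p_i, p_j) \ge c$ for some universal $c > 0$ and all $i \ne j$; and (b)~for every query point $x$, the oracle triple $(V_i(x) + C, V_i'(x), V_i''(x))$ takes at most two distinct values as $i$ ranges over $[K]$. My candidate is $V_i = V_\star + u_i$, where $V_\star$ is a fixed reference potential (say with curvature $(\alpha + \beta)/2$) and $u_i$ is a perturbation whose second derivative $\phi_i := u_i''$ is a compactly supported ``signature bump'' located near a scale $r_i \in [1/\sqrt{\beta}, 1/\sqrt{\alpha}]$, subject to the two moment conditions $\int \phi_i = 0$ and $\int x\, \phi_i(x)\, dx = 0$. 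These vanishing moments guarantee that both $u_i$ and $u_i'$ return to $0$ outside the support $W_i$ of $\phi_i$, so that $V_i$ coincides with $V_\star$ (up to the additive oracle constant) outside $W_i$. Placing the $W_i$ pairwise disjointly, with the $r_i$ geometrically spaced, supplies $K = \Theta(\log \kappa)$ scales and automatically enforces property~(b).

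\textbf{From information to the lower bound.} Let $I \sim \mathrm{Unif}([K])$ be a uniformly random true index and $T = (T_1, \ldots, T_n)$ the transcript of oracle answers after $n$ queries. Since $x_t$ depends only on $T_{<t}$ and the algorithm's internal randomness (independent of $I$), and since conditional on $x_t$ the answer takes at most two values as $I$ varies by~(b), the chain rule of mutual information yields $I(I; T) \le n$. One further tunes the signature bumps so that each $p_i$ places a constant mass on a pairwise-disjoint ``signature region'' $A_i$ on which the other $p_j$'s put negligible mass; then the test $\hat I(Y) = i$ when $Y \in A_i$ applied to the algorithm's output $Y$ has misclassification probability $P_e$ bounded away from $1$, thanks to the $\frac{1}{64}$-TV guarantee. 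Fano's inequality combined with data processing and the bound $I(I; Y) \le I(I; T) \le n$ then gives $n \ge (1 - P_e)\log_2 K - 1 \gtrsim \log \log \kappa$.

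\textbf{Main obstacle.} The crux is Step 1. A naive two-piece potential with curvature $\beta$ inside $[-r_i, r_i]$ and $\alpha$ outside trivially gives a TV gap but fails property~(b): the first derivative $V_i'(x)$ for $|x| > r_i$ encodes $r_i$ exactly through a term of order $(\beta - \alpha) r_i$, so a single query outside the inner region would reveal $r_i$. The two-moments-vanishing signature construction fixes this leakage, but one must then simultaneously ensure $\alpha \le V_i'' \le \beta$, fit the windows pairwise disjointly over $\log \kappa$ geometric scales within the effective support of the distribution, and verify that a small, local perturbation still translates into a constant pairwise TV gap together with an identifiable signature region $A_i$ of constant mass under $p_i$. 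Balancing these constraints---especially at very small scales where the reference density is already highly concentrated, and at very large scales where it is exponentially small---is the principal technical challenge I anticipate.
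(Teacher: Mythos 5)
Your high-level strategy matches the paper exactly: build $K \asymp \log\kappa$ hard instances at geometric scales, show each oracle query leaks only $O(1)$ bits because the potentials are locally indistinguishable, identify a signature region for the guess $\hat Z$, and close with Fano's inequality. The two-moment-vanishing trick you propose for the second-derivative perturbation is also precisely the mechanism behind the paper's key lemma (the difference $V_{i+1}''-V_i''$ is arranged to integrate to zero once and twice, so $V_i=V_{i+1}$, $V_i'=V_{i+1}'$, $V_i''=V_{i+1}''$ outside a bounded transition region). So there is no disagreement about the skeleton of the argument.

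The gap is in the specific construction, and it is the very obstacle you flag at the end. A fixed reference $V_\star$ with $V_\star''\approx(\alpha+\beta)/2$ forces the reference density to concentrate at scale $\sim (\alpha+\beta)^{-1/2}\asymp\beta^{-1/2}=1/\sqrt\kappa$ (after normalizing $\alpha=1$). A localized, two-moment-cancelling bump $u_i$ placed at a scale $r_i\gg 1/\sqrt\kappa$ modifies $e^{-V_\star}$ only where there is essentially no mass, so $\mathrm{TV}(p_i,p_\star)$ is exponentially small, not a universal constant. Making the bump large enough to move constant mass would violate $\alpha\le V_i''\le\beta$, and shrinking the window makes the moment-cancelled perturbation even weaker. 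In short, "small local perturbation of a fixed reference" cannot create constant separation across geometrically spaced scales. The paper's construction is structurally different: rather than disjoint bumps on a fixed reference, it uses a \emph{nested} family of potentials in which the curvature "wall" of height $\kappa$ is \emph{moved outward} as $i$ grows, so that $p_i$ behaves like a uniform distribution on $[-2^{i-1}/\sqrt\kappa,\,2^{i-1}/\sqrt\kappa]$; consecutive $V_i,V_{i+1}$ then differ only on one transition band (where the moment cancellation is enforced), while $V_i$ and $V_j$ for $|i-j|$ large differ on the union of intervening bands. As a consequence the per-query entropy bound is not $2$ as you assert but a slightly larger constant (the paper shows at most $5$ possible answers, coming from $W_1$, $W_m$, and up to three indices near $\log_2(\sqrt\kappa\, x_j)$), which still gives $I \le n\log 5$ and hence $n\gtrsim\log\log\kappa$. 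To repair your proposal you would need to replace the "fixed reference $+$ disjoint bumps" picture with this moving-wall, nested design, and replace the two-value claim with a constant-size bound.
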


We now give some intuition for the lower bound construction, and defer the proof to Appendix~\ref{scn:stronglylc_lower}. The strategy is to construct a family of distributions $\{p_1. \ldots, p_m\}$  which forms a packing of the class $\mc P$ in total variation distance. Because the family is well-separated, if an algorithm can accurately sample from each $p_i$, it can also \emph{identify}  $p_i$. We construct the family ${\{p_i\}}_{i=1}^m$ in such a way that identifying $p_i$ from queries to low-order oracles requires at least $\Omega(\log m)$ queries, e.g., via bisection.

With the strategy in place, we now describe motivation for the construction of the family ${\{p_i\}}_{i=1}^m$. Suppose that we have a distribution $p\propto \exp(-V)$ which is rescaled to satisfy $1 \le V'' \le \kappa$. The bound $V'' \ge 1$ implies that a substantial fraction of the mass of $p$ is supported on the interval $[-1, 1]$. On the other hand, the bound $V'' \le \kappa$ allows for the density $p$ to suddenly drop from $\approx 1$ to nearly $0$ over an interval of much smaller length, $\asymp 1/\sqrt\kappa$. Hence, as a first approximation, we can imagine dividing the interval $[-1,1]$ into $\asymp \sqrt\kappa$ bins, and thinking of each $p_i$ as piecewise constant on each bin. While keeping the log-concavity constraint in mind, for the purpose of this heuristic discussion we will consider the family ${\{p_i\}}_{i=1}^m$ of $m \asymp \sqrt\kappa$ distributions, where $p_i$ is the uniform distribution on $[-i/\sqrt\kappa, i/\sqrt\kappa]$; see Figure~\ref{fig:unif}.

\begin{figure}
    \centering
    \begin{tikzpicture}
  \draw[<->] (-6, 0) -- (6, 0) node[right] {};
  \draw[->] (0, 0) -- (0, 1.5) node[above] {};
  \draw[blue] (-1, 0) -- (-1, 1) -- (1, 1) -- (1, 0);
  \draw[blue] (-2, 0) -- (-2, 0.5) -- (2, 0.5) -- (2, 0);
  \draw[blue] (-3, 0) -- (-3, 0.333) -- (3, 0.333) -- (3, 0);
  \draw[blue] (-4, 0) -- (-4, 0.25) -- (4, 0.25) -- (4, 0);
  \draw[blue] (-5, 0) -- (-5, 0.2) -- (5, 0.2) -- (5, 0);
\end{tikzpicture}
    \caption{A family of uniform distributions.}
    \label{fig:unif}
\end{figure}
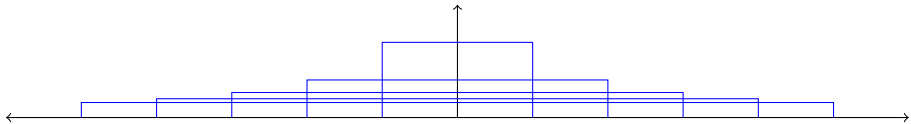

However, this family is not well-separated in total variation distance. Indeed, it can be checked that for $i < j$, in order for the total variation distance between $p_i$ and $p_j$ to be appreciable, we require $j\ge 2i$. This motivates us to consider the subfamily $\{p_{2^i}\,, 1\le i \le \log_2 \sqrt{\kappa}\}$, of which there are $\mc O(\log\kappa)$ elements.
For this subfamily, we can hope to reduce the task of sampling to that of identifying $p_{2^i}$ via queries, and binary search for this problem requires only $\mc O(\log\log\kappa)$ queries. This is the basis for our somewhat unusual lower bound.

The uniform distributions involved in this informal discussion do not belong to the class $\mc P$, as they are neither strongly log-concave nor log-smooth. The main technical challenge in our lower bound is to produce distributions which lie in $\mc P$ but still behaves similarly to uniform distributions, in the sense of requiring $\Omega(\log\log\kappa)$ oracle queries to identify a distribution via queries. We defer these details to the appendix.

\section{Upper bound}

In this section, we show that the $\Omega(\log\log\kappa)$ lower bound in the previous section is achievable. Note that the existing guarantees for standard sampling algorithms (c.f.\ the comparison in~\cite{shen2019randomized}) usually scale polynomially in the condition number $\kappa$, so they are not optimal for our setting.

Moreover, the heuristic discussion of the lower bound construction motivates choosing the query points according to a binary search strategy. In order to implement this idea, we turn towards the classical idea of rejection sampling: first, we make queries in order to construct a \emph{proposal} distribution~$q$. To generate new samples from $p$, we repeatedly draw samples from $q$, and each sample is accepted with a carefully chosen acceptance probability (which can be computed via additional queries to the oracle for the density up to normalization).

\begin{algorithm}[H]
  \caption{{\sc Envelope}}\label{ALG:upper}
      \begin{algorithmic}[1]
    % \Procedure{Envelope}{}
    \State{use binary search to find the first index $i_+ \in \{0,1,\dotsc,\lceil \frac{1}{2} \log_2 \kappa \rceil\}$ for which $V(2^{i_+}/\sqrt\kappa) \ge \frac{1}{2}$}
    \State{use binary search to find the first index $i_- \in \{0,1,\dotsc,\lceil \frac{1}{2} \log_2 \kappa \rceil\}$ for which $V(-2^{i_-}/\sqrt\kappa) \ge \frac{1}{2}$}
    \State set $x_- := -2^{i_-}/\sqrt\kappa$ and $x_+ := 2^{i_+}/\sqrt\kappa$
    \State \Return {\begin{align*}%\label{eq:upper_envelope}
    \tilde q(x)
    &:= \begin{cases} \displaystyle\exp\bigl[-\frac{x-x_-}{2x_-} - \frac{(x-x_-)^2}{2}\bigr]\,, & x \le x_-\,, \\ 1\,, & x_- \le x \le x_+\,, \\ \displaystyle\exp\bigl[-\frac{x-x_+}{2x_+} - \frac{(x-x_+)^2}{2}\bigr]\,, & x \ge x_+\,.  \end{cases}
\end{align*}}
    %\State\Return $(-2^{i_-}/\sqrt\kappa, 2^{i_+}/\sqrt\kappa)$
    % \EndProcedure
  \end{algorithmic}
\end{algorithm}
% \end{minipage}

We give the high-level pseudocode for building an upper envelope in Algorithm~\ref{ALG:upper}, and for generating new samples in Algorithm~\ref{ALG:sample}. Note that while our lower bound applies to algorithms using $0^{\rm th}$-, $1^{\rm st}$-, and $2^{\rm nd}$-order information, our upper bound algorithm in fact only requires $0^{\rm th}$-order information. We next proceed to discuss details of the algorithms.

Before implementing Algorithm~\ref{ALG:upper}, we first perform several preprocessing steps. Recall that the mode of the distribution $p$ is assumed to be at $0$, and that $p \propto \exp(-V)$. We also assume that $1 \le V'' \le \kappa$. To reduce to this case, say we start with $\alpha \le V'' \le \beta$, and the bounds $\alpha$, $\beta$ are known. Then, observe that the rescaled potential $\bar V(x) := V(x/\sqrt\alpha)$ satisfies $1 \le \bar V'' \le \kappa = \beta/\alpha$. Given access to an oracle for $V$ (up to additive constant), we can simulate an oracle to $\bar V$ (up to additive constant) and apply our algorithm to generate a sample $\bar X$ from the density $\bar p\propto \exp(-\bar V)$; it can be checked that $\bar X/\sqrt \alpha$ is a sample from $p$. Finally, we assume that the oracle, when given a query point $x$, returns $V(x)$, where $V$ is normalized to satisfy $V(0) = 0$; this is achieved by replacing the output $V(x)$ of the oracle by $V(x) - V(0)$.

\begin{wrapfigure}{R}{0.4\textwidth}
\vspace{-.8cm}
\begin{minipage}{0.4\textwidth}
\begin{algorithm}[H]
  \caption{{\sc Sample}}\label{ALG:sample}
  \begin{algorithmic}[1]
    % \Procedure{Sample}{}
    \State normalize $\tilde q$ to form $q$
    \While{sample is not accepted}
    \State sample $X \sim q$
    \State accept $X$ w.p.\ $\tilde p(X)/\tilde q(X)$
    \EndWhile
    \State\Return{$X$}
    % \EndProcedure
  \end{algorithmic}
\end{algorithm}
\end{minipage}
\end{wrapfigure}

Implementing the first step of Algorithm~\ref{ALG:upper} requires performing binary search over an array of size $\mc O(\log \kappa)$, which requires only $\mc O(\log\log\kappa)$ queries; similar comments apply to the second step. We prove in Appendix~\ref{scn:stronglylc_upper} that the indices $i_-$ and $i_+$ always exist under our assumptions.
% The output of Algorithm~\ref{ALG:upper} is the functio
% \begin{align}\label{eq:upper_envelope}
%     \tilde q(x)
%     &:= \begin{cases} \displaystyle\exp\bigl[-\frac{x-x_-}{2x_-} - \frac{(x-x_-)^2}{2}\bigr]\,, & x \le x_-\,, \\ 1\,, & x_- \le x \le x_+\,, \\ \displaystyle\exp\bigl[-\frac{x-x_+}{2x_+} - \frac{(x-x_+)^2}{2}\bigr]\,, & x \ge x_+\,.  \end{cases}
% \end{align}
We prove in Appendix~\ref{scn:stronglylc_upper} that the output $\tilde q$ of Algorithm~\ref{ALG:upper} is an \emph{upper envelope} for the oracle, i.e., $\tilde q \ge \exp(-V)$. The upper envelope $\tilde q$ constructed in Algorithm~\ref{ALG:upper} is the input to Algorithm~\ref{ALG:sample}; see Figure~\ref{fig:upper_env}.

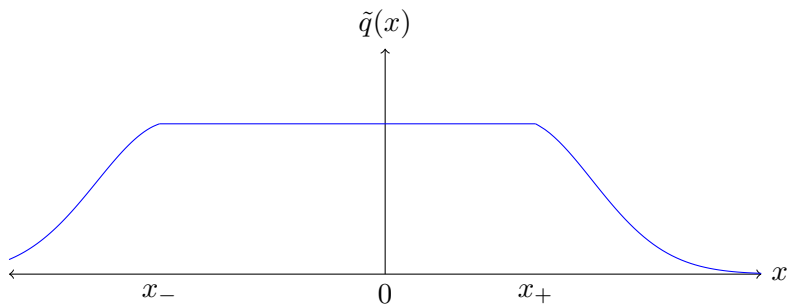
\begin{figure}
    \centering
    \begin{tikzpicture}
  \draw[<->] (-5, 0) -- (5, 0) node[right] {$x$};
  \draw[->] (0, 0) -- (0, 3) node[above] {$\tilde q(x)$};
  \draw[blue] (-3, 2) -- (2, 2);
  \draw[domain=-5:-3, smooth, variable=\x, blue] plot ({\x}, {2*exp(-(\x+3)/(-6) - (\x + 3) * (\x + 3)/2)});
  \draw[domain=2:5, smooth, variable=\x, blue] plot ({\x}, {2*exp(-(\x-2)/4 - (\x -2) * (\x -2)/2)});
  \node at (0, 0) [below] {$0$};
  \node at (-3, 0) [below] {$x_-$};
  \node at (2, 0) [below] {$x_+$};
\end{tikzpicture}
    \caption{The upper envelope $\tilde q$ constructed in Algorithm~\ref{ALG:upper}.} %\patrik{The right gaussian tail seems to cross the x-axis}} \sinho{I don't think it crosses, it just gets very close}
    \label{fig:upper_env}
\end{figure}

In Algorithm~\ref{ALG:sample}, we normalize $\tilde q$ to a probability distribution $q$, which requires computing a one-dimensional integral for the normalizing constant: $\int_\R \tilde q$. Once normalized, we must also be able to draw samples from the distribution $q$. These steps can be implemented with low computational burden, but we do not dwell on this point here because we are primarily interested in the \emph{query complexity} in this work. Note that the steps of normalizing $q$ and drawing new samples from $q$ do not require additional queries to the oracle.

The framework of rejection sampling provides a flexible guarantee: if we desire an \emph{exact} sample from $p$, then we can continue drawing samples from $q$ until one is accepted, yielding an exact sample with a guarantee on the expected total number of queries. On the other hand, if we are content with producing a sample whose law is at a fixed distance $\varepsilon$ away from $p$ in total variation distance, then we can force the algorithm to stop after a prespecified number of iterations, declaring failure if no sample from $q$ is accepted, and achieve the total variation guarantee. We describe both of these guarantees in the following theorem, which summarizes the query complexity of our algorithm.

\begin{theorem}\label{thm:upper_bd}
    Suppose that the target distribution $p$ belongs to the class of univariate strongly log-concave and log-smooth distributions (\autoref{defn:strongly_lc}).
    Algorithm~\ref{ALG:upper} uses $\mc O(\log\log\kappa)$ queries to build the upper envelope $\tilde q$. Once $\tilde q$ is constructed, we can use it for either of the following tasks.
    \begin{enumerate}
        \item (exact sampling) Algorithm~\ref{ALG:sample} returns an exact sample from $p$ after an additional $\mc O(1)$ expected queries to the oracle.
        \item (approximate sampling) Fix an accuracy parameter $0 < \varepsilon < 1$. If we limit Algorithm~\ref{ALG:sample} to use at most $\mc O(\log(1/\varepsilon))$ queries, then the output of Algorithm~\ref{ALG:sample} (or `FAILURE', if Algorithm~\ref{ALG:sample} fails to accept a sample within the allowed number of queries) has a distribution which is at total variation distance at most $\varepsilon$ away from $p$.
    \end{enumerate}
\end{theorem}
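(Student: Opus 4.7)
My plan is to verify, in order: (a) the query count of Algorithm~\ref{ALG:upper} is $O(\log\log\kappa)$; (b) $\tilde q \ge \exp(-V)$ pointwise; (c) $Z := \int \tilde q \le C \, Z_p$ for a universal constant $C$, where $Z_p := \int \exp(-V)$; and (d) the exact- and approximate-sampling guarantees follow from (c) via the standard rejection-sampling analysis. After the preprocessing described just before Algorithm~\ref{ALG:sample} I may assume $V(0) = V'(0) = 0$ and $1 \le V'' \le \kappa$. Step (a) is immediate: each of the two binary searches is over $O(\log\kappa)$ indices and so uses $O(\log\log\kappa)$ queries, and existence of $i_\pm$ follows from $V(x) \ge x^2/2$ (strong convexity), which forces $V(2^{\lceil \frac{1}{2}\log_2 \kappa \rceil}/\sqrt \kappa) \ge V(1) \ge 1/2$.

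For step (b), the inequality on $[x_-, x_+]$ reduces to $V \ge 0$, which holds since $0$ is the mode. For $x \ge x_+$, strong convexity gives
\[
V(x) \ge V(x_+) + V'(x_+)(x-x_+) + \tfrac{1}{2}(x-x_+)^2,
\]
so the envelope bound reduces to $V(x_+) + V'(x_+)(x-x_+) \ge (x-x_+)/(2 x_+)$. The tangent inequality at $x_+$ evaluated at $0$, combined with $V(0) = 0$, yields $V'(x_+)\, x_+ \ge V(x_+) \ge 1/2$ (the last step uses the defining property of $i_+$), whence $V'(x_+) \ge 1/(2 x_+)$ and the claim follows. The case $x \le x_-$ is symmetric.

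For step (c), the middle contributes $x_+ - x_-$ to $Z$, and dropping the Gaussian factor in the tail gives $\int_{x_+}^\infty \tilde q \le \int_0^\infty \exp[-u/(2 x_+)]\, du = 2 x_+$, and likewise $\int_{-\infty}^{x_-} \tilde q \le 2|x_-|$; hence $Z \le 3(x_+ - x_-)$. For the matching lower bound on $Z_p$, by the definition of $i_+$ together with the upper bound $V(x) \le \kappa x^2/2$ (from $V'' \le \kappa$), there is always a subinterval of $[0, x_+]$ of length at least $x_+/2$ on which $V \le 1/2$ (either $[0, x_+/2]$ when $i_+ \ge 1$, by monotonicity of $V$ on $[0,\infty)$, or $[0, x_+]$ when $i_+ = 0$). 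The symmetric statement on $[x_-, 0]$ gives $Z_p \ge e^{-1/2}(x_+ - x_-)/2$, so the per-trial acceptance probability is $r := Z_p/Z \ge 1/(6 e^{1/2})$, a universal positive constant.

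For step (d), each rejection trial costs exactly one oracle query (to evaluate $V(X)$ at the proposed $X$; $\tilde q(X)$ is computable without queries). The number of trials until acceptance is stochastically dominated by a geometric random variable with parameter $r$, so the expected additional query count is $1/r = O(1)$. Truncating Algorithm~\ref{ALG:sample} at $k = \lceil r^{-1} \log(1/\varepsilon)\rceil = O(\log(1/\varepsilon))$ trials, the output is distributed as $(1-(1-r)^k)\, p + (1-r)^k \, \delta_{\mathrm{FAILURE}}$, so the total variation distance from $p$ equals the failure probability $(1-r)^k \le e^{-rk} \le \varepsilon$. The main obstacle is the constant-factor comparison in step (c): it is the only place where both sides of $1 \le V'' \le \kappa$ are needed simultaneously, together with the precise geometric choice of $x_\pm$ via the dyadic binary search.
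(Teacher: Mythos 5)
Your proof is correct and follows essentially the same route as the paper's: binary search costs, the convexity-based verification that $\tilde q$ dominates $\tilde p$ via $V'(x_\pm) \ge 1/(2|x_\pm|)$, the constant-factor comparison $Z_q \lesssim x_+ - x_- \lesssim Z_p$, and the standard rejection-sampling reduction. The only cosmetic difference is that the paper first restricts to the half-line $\R_+$ and invokes symmetry, whereas you treat both sides directly and compute the explicit constant $r \ge 1/(6 e^{1/2})$.
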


We give the proof in Appendix~\ref{scn:stronglylc_upper}.

\section{Application to Hit-and-Run}\label{scn:hit_and_run}

Although our upper bound algorithm applies only to univariate distributions, in this section we demonstrate how to use our algorithm as a subroutine for the well-known Hit-and-Run algorithm for sampling from high-dimensional log-concave distributions.

Hit-and-Run algorithms form a class of Markov chain Monte Carlo methods~\citep{belisle1993hit}. Given a target distribution with density $p:\R^d \to \R$ and a distribution $\nu$ on the sphere $\mathbb{S}^{d-1}$, one can construct a Markov chain with stationary distribution $p$ as follows. If the current point of the Markov chain is at the point $x_t \in \R^d$ at iteration $t$, we
\begin{enumerate}
    \item choose a random direction $u \sim \nu$,
    \item\label{hit-and-run step 2} choose a random step size $\lambda$ from the distribution with density $\propto p(x_t + \lambda u)$,
    \item and set $x_{t+1} = x_t + \lambda u$.
\end{enumerate}
It is not hard to see that the above dynamics are reversible with respect to $p$. Lovász and Vempala \citep{lovasz1999hit, lovasz2003hit} study this algorithm when $p$ is log-concave, and establish rapid mixing of the chain when the direction distribution is uniform. However, they only partially addressed the issue of implementation, suggesting the use of an approximate algorithm for Step~\ref{hit-and-run step 2}. In the case when $p$ is strongly log-concave and log-smooth, our methods allow us to resolve this issue and provide an efficient and exact sampling algorithm.

\begin{proposition}\label{prop:hit-and-run}
If $p$ is $1$-strongly log-concave and $\kappa$-log-smooth, then Step \ref{hit-and-run step 2} can be implemented exactly, with amortized query and time complexity $\mc O(\log(\kappa d))$.
\end{proposition}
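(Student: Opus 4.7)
The restriction of $p$ to the line $\{x_t + \lambda u : \lambda \in \R\}$ has potential $f(\lambda) := V(x_t + \lambda u)$, whose second derivative $f''(\lambda) = u^\top \nabla^2 V(x_t + \lambda u)\,u$ lies in $[1, \kappa]$ since $u$ is a unit vector and $p$ is $1$-strongly log-concave and $\kappa$-log-smooth. Thus the one-dimensional conditional distribution is itself $1$-strongly log-concave and $\kappa$-log-smooth, and it fits the framework of Theorem~\ref{thm:upper_bd} except that the mode $\lambda^\star$ of $f$ is not in general located at the origin, as Definition~\ref{defn:strongly_lc} requires.

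The plan is to localize $\lambda^\star$ by binary search and then apply a shifted version of Algorithms~\ref{ALG:upper}--\ref{ALG:sample}. Since $f'$ is strictly increasing with slope in $[1,\kappa]$ and $f'(\lambda^\star)=0$, I bound $|\lambda^\star| \le |f'(0)| = |u^\top \nabla V(x_t)| \le \kappa\,\lVert x_t \rVert$, where the last inequality uses $\nabla V(0)=0$ and $\kappa$-smoothness of $V$. Running binary search on the monotone function $f'$ (or golden-section search on $f$ under a $0^{\rm th}$-order oracle) over $[-\kappa\lVert x_t\rVert,\ \kappa\lVert x_t\rVert]$ therefore produces $\hat\lambda$ with $|\hat\lambda-\lambda^\star|\le 1/\sqrt\kappa$ in $\mc O(\log(\kappa^{3/2}\,\lVert x_t\rVert)) = \mc O(\log \kappa + \log\lVert x_t\rVert)$ queries.

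Next, I reduce to the mode-at-origin case via $\tilde f(y) := f(y+\hat\lambda) - f(\hat\lambda)$, which still satisfies $\tilde f(0)=0$ and $1 \le \tilde f'' \le \kappa$. The true mode of $\tilde f$ lies in $[-1/\sqrt\kappa, 1/\sqrt\kappa]$, so $\kappa$-smoothness forces $\inf \tilde f \ge -1/2$. Applying Algorithm~\ref{ALG:upper} to $\tilde f$ and multiplying the resulting envelope $\tilde q$ by the constant factor $e^{1/2}$ preserves the upper-envelope property $\tilde q \ge \exp(-\tilde f)$ everywhere, while decreasing the acceptance probability of Algorithm~\ref{ALG:sample} by at most the factor $e^{-1/2}$. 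Theorem~\ref{thm:upper_bd} then delivers an exact sample in $\mc O(1)$ additional expected queries.

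For the amortization, at stationarity $x_t \sim p$ and the $1$-strong log-concavity of $p$ yields $\mathbb{E}[\lVert x_t\rVert^2] \lesssim d$; Jensen's inequality on the concave $\log$ gives $\mathbb{E}\log\lVert x_t \rVert \le \tfrac{1}{2}\log \mathbb{E}[\lVert x_t\rVert^2] = \mc O(\log d)$, so the expected per-step cost is $\mc O(\log\kappa + \log d) = \mc O(\log(\kappa d))$. The main obstacle in executing this plan is the second step: one has to verify carefully that the modest inflation of $\tilde q$ simultaneously corrects for the shifted mode \emph{and} preserves the upper-envelope property on the Gaussian tails of $\tilde q$, while keeping the acceptance probability of order $\Omega(1)$. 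Once this is in place, the remaining steps are routine bookkeeping.
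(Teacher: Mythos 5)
Your outline tracks the paper's proof closely: bound the location of the line-restricted minimizer in terms of $\kappa$ and $\lVert x_t \rVert$, localize it by bisection to precision $1/\sqrt\kappa$, reduce to the one-dimensional rejection sampler, and amortize the $\log\lVert x_t \rVert$ term over the trajectory using sub-Gaussian concentration of $p$. The shift-and-inflate reduction (recenter at $\hat\lambda$, then multiply $\tilde q$ by $e^{1/2}$) is a cleaner way to organize what the paper does by raising its threshold from $1/2$ to $3$ and rebuilding the envelope from scratch; you are right to flag it as the step needing care, and indeed there is a gap as stated. After shifting, you have $\tilde f(0)=0$ but $\tilde f'(0)\neq 0$, and the worst case ($y^\star = -1/\sqrt\kappa$, $\tilde f(y^\star) = -1/2$, $\tilde f''\equiv 1$ past the mode) gives $\tilde f(x) \approx -\tfrac12 + \tfrac12(x+1/\sqrt\kappa)^2$, so at the rightmost search point $x = 2^{\lceil \frac12\log_2\kappa\rceil}/\sqrt\kappa$ — which can be as small as $1$ — $\tilde f$ is only $\approx 0 < 1/2$. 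Algorithm~\ref{ALG:upper} as written can therefore fail to return any valid $i_+$. The fix is easy (enlarge the search set by $\mathcal{O}(1)$ indices, or raise the threshold, as the paper does), but it is a genuine correction you would need to make; once made, your verification that $\tilde f'(x_+)\ge 1/(2x_+)$, that $e^{1/2}\tilde q$ still dominates, and that the acceptance probability stays $\Omega(1)$ all go through.

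The second gap is the amortization. Writing ``at stationarity $x_t\sim p$'' assumes exactly what needs to be justified: the Hit-and-Run chain is started from an arbitrary point, so the time-averaged per-step cost $\tfrac1T\sum_t \log(\kappa\lVert x_t\rVert)\lor 1$ is not obviously controlled by the stationary expectation. The paper closes this by invoking geometric ergodicity of Hit-and-Run for log-concave targets \citep{lovasz2003hit}, which gives a.s.\ convergence of the ergodic average and makes ``amortized'' precise. You should also take the max with $1$ inside the expectation as the paper does; otherwise the Jensen step bounds a quantity that can be negative and does not upper-bound the actual per-step query count. Neither gap requires a new idea — one is a constant in the search range, the other is a citation and a $\lor 1$ — but both must be filled for the proof to be complete.
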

We defer the description of the algorithm, and the proof of Proposition \ref{prop:hit-and-run} to Appendix \ref{sec:proof of hit and run}. We remark that the query complexity is actually dominated by the cost of finding the maximum of $p$ when restricted to the line, and the sampling step is comparatively cheap.

\section{Conclusion and outlook}

In this paper, we established the oracle complexity of sampling from the class of univariate strongly log-concave and log-smooth distributions, in analogy with the now pervasive oracle lower bounds for optimization initiated by Nemirovsky and Yudin~\citep{nemirovskyyudin}. A clear future direction suggested by this work is to extend this result to higher dimensions, and to ultimately develop a theory of lower complexity bounds and optimal algorithms for sampling.

Recently, an intense amount of research has been devoted to the use of Markov chain Monte Carlo-based methods for sampling, and it may come as a surprise that the complexity lower bound we have proven in this paper is attained by an entirely different type of algorithm, namely rejection sampling. Our result highlights that standard algorithms may not be optimal, and that the search for optimal algorithms goes hand-in-hand with lower bound constructions.

In particular, our work motivates revisiting the idea of rejection sampling through the modern lens of minimax optimality. 

\section*{Acknowledgments}

Sinho Chewi was supported by the Department of Defense (DoD) through the National Defense Science \& Engineering Graduate Fellowship (NDSEG) Program. Thibaut Le Gouic was supported by NSF award IIS-1838071. Philippe Rigollet was supported by NSF awards IIS-1838071, DMS-1712596,  and DMS-2022448.

\bibliography{ref.bib}

\pagebreak

\appendix

\section{Proof of the lower bound}\label{scn:stronglylc_lower}

\subsection{The construction}

%Let $c>1$ be a large constant, and let $\delta < 1$ be a small positive constant, both to be chosen later.
Let $m$ be the largest integer such that 
\begin{align}\label{eq:lower_bd_defn_m}
    \exp\bigl(-\frac{2^{2m-2}}{2 \kappa}\bigr) \ge \frac{1}{2}\,.
\end{align}
Define two auxiliary functions
\begin{align*}
    \phi(x) := \begin{cases} \kappa\,, & 1/2 \le x < 1\,, \\ 1\,, & 1 \le x < 2\,, \\ \kappa\,, & 2 \le x < 5/2\,, \\ 0\, & \text{otherwise}\,, \end{cases} \qquad \psi(x) := \begin{cases} 1\,, & 5/2 \le x < 4\,, \\ \kappa\,, & 4 \le x < 5\,, \\ 0\,, & \text{otherwise}\,. \end{cases}
\end{align*}
We define a family ${(V_i)}_{i=1}^m$ of $1$-strongly convex and $\kappa$-smooth potentials as follows.
We  require that $V_i(0) = V_i'(0) = 0$ and that $V_i$ be an even function, so it suffices to specify $V_i''$ on $\R_+$. 
The second derivative is given by
\begin{align*}
    V_i''(x)
    &:= \one\bigl\{x \le \ttt 2^{i-1}\} + \phi\bigl( \frac{x}{\ttt 2^i} \bigr) + \sum_{j=i}^{m-1} \psi\bigl( \frac{x}{\ttt 2^j} \bigr) + \one\{x \ge 5\ttt 2^{m-1}\}\,, \qquad x \ge 0\,.
\end{align*}
Observe that all of the terms in the above summation have disjoint supports, see Figure~\ref{fig:second_deriv}.

\begin{figure}
    \centering
    \begin{tikzpicture}
  \draw[->] (0, 0) -- (12, 0) node[right] {$x$};
  \draw[->] (0, 0) -- (0, 5) node[above] {$V_i''(x/\sqrt \kappa)$};
  \draw[blue] (0, 1) -- (1, 1);
  \draw[blue, dashed] (1, 4) -- (2, 4);
  \draw[blue, dashed] (2, 1) -- (3.5, 1);
  \draw[blue, dashed] (3.5, 4) -- (5, 4);
  \draw[blue, dotted, thick] (5, 1) -- (6.5, 1);
  \draw[blue, dotted, thick] (6.5, 4) -- (8, 4);
  \draw[blue] (11, 1) -- (12, 1);
  \node at (0, 1) [left] {$1$};
  \node at (0, 4) [left] {$\kappa$};
  \node at (1, 0) [below] {$2^{i-1}$};
  \node at (2, 0) [below] {$2^i$};
  \node at (3.5, 0) [below] {$2^{i+1}$};
  \node at (5, 0) [below] {$\frac{5}{4}2^{i+1}$};
  \node at (6.5, 0) [below] {$2^{i+2}$};
  \node at (8, 0) [below] {$\frac{5}{4}2^{i+3}$};
  \node at (9.5, -0.25) [below] {$\dots$};
  \node at (11, 0) [below] {$\frac{5}{4} 2^{m+2}$};
  % V_{i+1}
%   \draw[orange] (0, 0.97) -- (2, 0.97);
%   \draw[orange, dashed] (2, 4) -- (3.5, 4);
%   \draw[orange, dashed] (3.5, 0.97) -- (6.5, 0.97);
%   \draw[orange, dashed] (6.5, 3.97) -- (8, 3.97);
\end{tikzpicture}
    \caption{The dashed lines correspond to $\phi$ and the dotted lines correspond to $\psi$.}
    \label{fig:second_deriv}
\end{figure}
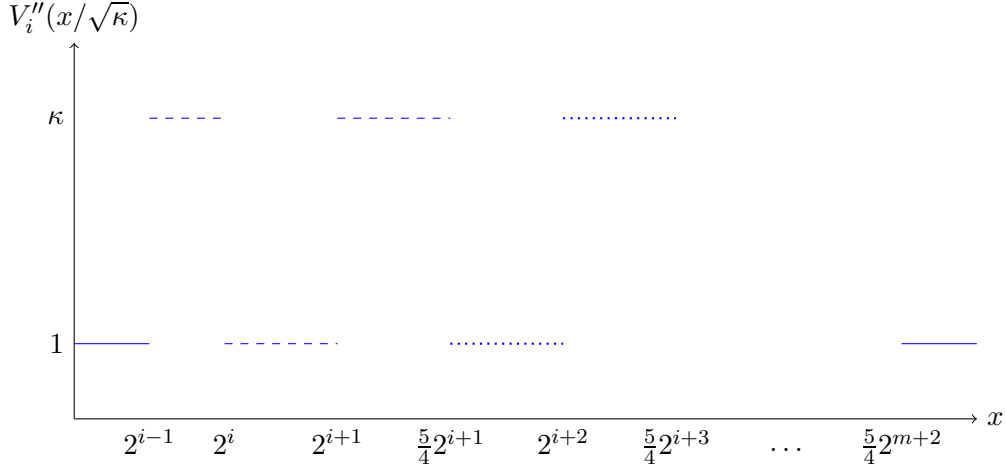

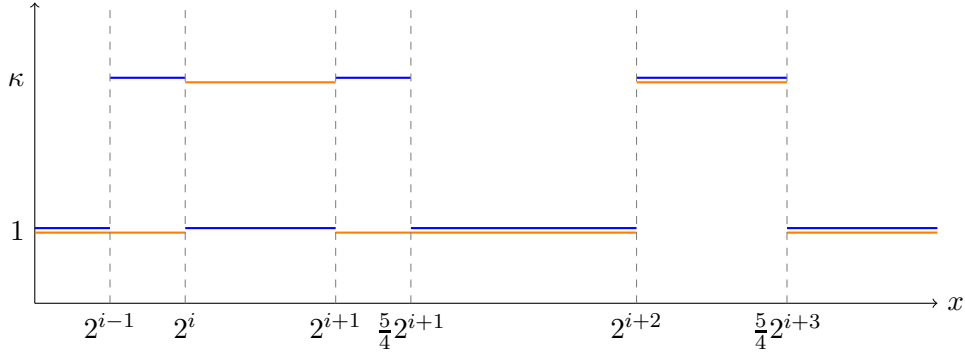
\begin{figure}
    \centering
    \begin{tikzpicture}
  \draw[->] (0, 0) -- (12, 0) node[right] {$x$};
  \draw[->] (0, 0) -- (0, 4) node[above] {};
  \draw[blue, thick] (0, 1) -- (1, 1);
  \draw[blue, thick] (1, 3) -- (2, 3);
  \draw[blue, thick] (2, 1) -- (4, 1);
  \draw[blue, thick] (4, 3) -- (5, 3);
  \draw[blue, thick] (5, 1) -- (8, 1);
  \draw[blue, thick] (8, 3) -- (10, 3);
  \draw[blue, thick] (10, 1) -- (12, 1);
  %\draw[blue, thick] (6.5, 3) -- (8, 3);
  %\draw[blue, thick] (11, 1) -- (12, 1);
  \node at (0, 0.97) [left] {$1$};
  \node at (0, 2.97) [left] {$\kappa$};
  \node at (1, 0) [below] {$2^{i-1}$};
  \node at (2, 0) [below] {$2^i$};
  \node at (4, 0) [below] {$2^{i+1}$};
  \node at (5, 0) [below] {$\frac{5}{4}2^{i+1}$};
  \node at (8, 0) [below] {$2^{i+2}$};
  \node at (10, 0) [below] {$\frac{5}{4}2^{i+3}$};
  %\node at (9.5, -0.25) [below] {$\dots$};
  %\node at (11, 0) [below] {$\frac{5}{4}t 2^{m+2}$};
  \draw[orange, thick] (0, 0.94) -- (2, 0.94);
  \draw[orange, thick] (2, 2.94) -- (4, 2.94);
  \draw[orange, thick] (4, 0.94) -- (8, 0.94);
  \draw[orange, thick] (8, 2.94) -- (10, 2.94);
  \draw[orange, thick] (10, 0.94) -- (12, 0.94);
  % V_{i+1}
%   \draw[orange] (0, 0.97) -- (2, 0.97);
%   \draw[orange, dashed] (2, 4) -- (3.5, 4);
%   \draw[orange, dashed] (3.5, 0.97) -- (6.5, 0.97);
%   \draw[orange, dashed] (6.5, 3.97) -- (8, 3.97);
    \draw[gray, dashed] (1, 0) -- (1, 4);
    \draw[gray, dashed] (2, 0) -- (2, 4);
    \draw[gray, dashed] (4, 0) -- (4, 4);
    \draw[gray, dashed] (5, 0) -- (5, 4);
    \draw[gray, dashed] (8, 0) -- (8, 4);
    \draw[gray, dashed] (10, 0) -- (10, 4);
\end{tikzpicture}
    \caption{We plot $V_i''$ (in blue) and $V_{i+1}''$ (in orange). In this figure, we do not distort the horizontal axis lengths to make it easier to visually compare the relative lengths of intervals on which the second derivatives are constant.}
    \label{fig:comparison}
\end{figure}

The following lemma provides intuition for the construction.

\begin{lemma}
    We have the equalities
    \begin{align*}
        V_i
        &= V_{i+1}\,, \\
        V_i'
        &= V_{i+1}'\,, \\
        V_i''
        &= V_{i+1}''\,,
    \end{align*}
    outside of the set $\{x\in\R : \ttt 2^{i-1} \le \abs x \le \frac{5}{4} \ttt 2^{i+1}\}$.
\end{lemma}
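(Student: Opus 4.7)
The plan is to reduce all three equalities to two moment identities for the piecewise-constant difference $D := V_i'' - V_{i+1}''$. Since both potentials are even with $V_i(0) = V_i'(0) = 0$, it suffices to argue on $\R_+$ and then extend by symmetry. On $\R_+$, the standard identities $V_i'(x) - V_{i+1}'(x) = \int_0^x D(t) \, dt$ and $V_i(x) - V_{i+1}(x) = \int_0^x (x - t) D(t) \, dt$ hold (the first from the fundamental theorem of calculus, the second by Fubini after iterating). I will establish that (a) $D$ is supported on $I := [\ttt 2^{i-1}, \frac{5}{4} \ttt 2^{i+1}]$; (b) $\int_I D = 0$; and (c) $\int_I t \, D(t) \, dt = 0$. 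Claim (a) immediately gives $V_i'' = V_{i+1}''$ on the complement of $I$; (a) together with (b) then gives $V_i' = V_{i+1}'$ on that complement; and (a)--(c) kill the dependence on $x$ in the kernel $(x - t)$, yielding $V_i = V_{i+1}$ there as well.

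For (a), I would unpack the four summands defining $V_i''$ on $\R_+$ by inspecting their supports: the first indicator gives $V_i'' = 1$ on $[0, \ttt 2^{i-1}]$; the bump $\phi(\cdot/(\ttt 2^i))$ is supported precisely on $I$; the tail $\sum_{j=i}^{m-1} \psi(\cdot/(\ttt 2^j))$ is supported on $[\frac{5}{4} \ttt 2^{i+1}, 5 \ttt 2^{m-1}]$; and the final indicator covers $[5 \ttt 2^{m-1}, \infty)$. Passing from $V_i''$ to $V_{i+1}''$, the shift $i \mapsto i+1$ moves the bump out of $I$ entirely, but on the overlap $[\frac{5}{4} \ttt 2^{i+1}, \frac{5}{4} \ttt 2^{i+2}]$ the new bump $\phi(\cdot/(\ttt 2^{i+1}))$ reproduces exactly the $j = i$ summand $\psi(\cdot/(\ttt 2^i))$ that is dropped from the tail (a direct comparison of values, $1$ on the low sub-interval and $\kappa$ on the high one, shows this). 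All other indicators and tail terms are unchanged, so $V_i'' = V_{i+1}''$ on the complement of $I$.

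For (b) and (c) I would read off the values of $V_i''$ and $V_{i+1}''$ on $I$ itself. On the three sub-intervals $[\ttt 2^{i-1}, \ttt 2^i]$, $[\ttt 2^i, \ttt 2^{i+1}]$, $[\ttt 2^{i+1}, \frac{5}{4} \ttt 2^{i+1}]$, of lengths $\ttt 2^{i-1}$, $\ttt 2^i$, $\ttt 2^{i-1}$, one finds $V_i'' = \kappa, 1, \kappa$ and $V_{i+1}'' = 1, \kappa, 1$. Hence $D = \kappa - 1$ on the two outer sub-intervals and $D = -(\kappa - 1)$ on the middle one, and $\int_I D = (\kappa - 1)(2\ttt 2^{i-1} - \ttt 2^i) = 0$. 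For (c), the clean observation is that $D$ is symmetric about the midpoint $M = 3\ttt 2^{i-1}$ of $I$: in the coordinate $s = t - M$ the sub-intervals become $[-2 \ttt 2^{i-1}, -\ttt 2^{i-1}]$, $[-\ttt 2^{i-1}, \ttt 2^{i-1}]$, $[\ttt 2^{i-1}, 2 \ttt 2^{i-1}]$, and $D$ takes the same value on the two outer mirror-image pieces. This forces $\int_I (t - M) D(t) \, dt = 0$, which combined with (b) gives $\int_I t \, D(t) \, dt = 0$. The only real obstacle is the careful bookkeeping of the piecewise values and sub-interval lengths; once this is laid out, both moment identities follow by elementary cancellation and the three claimed equalities fall out at once.
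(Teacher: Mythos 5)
Your proof is correct and follows essentially the same route as the paper's: both reduce to showing that the piecewise-constant difference of second derivatives is supported on $I$ and has vanishing zeroth and first moments over $I$, then integrate once and twice. The one stylistic difference is your symmetry-about-the-midpoint observation for killing $\int_I (t-M) D(t)\, dt$, which replaces the paper's brief explicit three-piece integration; it is a slightly cleaner way to see the cancellation but amounts to the same computation.
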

\begin{proof}
    Refer to Figure~\ref{fig:comparison} for a visual aid for the proof.

    Clearly the potentials and derivatives match when $\abs x \le \ttt 2^{i-1}$.
    Since the second derivatives match when $\abs x \ge \frac{5}{4} \ttt 2^{i+1}$, it suffices to show that $V_i'( \frac{5}{4}\ttt 2^{i+1}) = V_{i+1}'( \frac{5}{4}\ttt 2^{i+1})$ and $V_i( \frac{5}{4}\ttt 2^{i+1}) = V_{i+1}( \frac{5}{4}\ttt 2^{i+1})$.

    To that end, note that for $x\ge 0$,
    \begin{align*}
        V_{i+1}''(x) - V_i''(x)
        &= \one\{\ttt 2^{i-1} < x \le \ttt 2^i\} - \phi\bigl( \frac{x}{\ttt 2^i} \bigr) +\phi\bigl( \frac{x}{\ttt 2^{i+1}}\bigr) -\psi\bigl( \frac{x}{\ttt 2^i}\bigr) \\
        &= \begin{cases} -(\kappa-1)\,, & \ttt 2^{i-1} \le x \le \ttt 2^i\,, \\ +(\kappa-1)\,, & \ttt 2^i \le x \le \ttt 2^{i+1}\,, \\ -(\kappa-1)\,, & \ttt 2^{i+1} \le x \le \frac{5}{4} \ttt 2^{i+1}\,, \\ 0\,, & \text{otherwise}\,. \end{cases}
    \end{align*}
    A little algebra shows that the above expression integrates to zero, hence we deduce the equality $V_i'( \frac{5}{4} \ttt 2^{i+1}) = V_{i+1}'( \frac{5}{4} \ttt 2^{i+1})$.
    Also, by integrating this expression twice, we see that
    \begin{align*}
        V_{i+1}\bigl( \frac{5}{4} \ttt 2^{i+1}\bigr) - V_i\bigl( \frac{5}{4} \ttt 2^{i+1}\bigr)
        &= \underbrace{- \frac{\kappa-1}{2} \, {(\ttt 2^{i-1})}^2}_{\text{integral on}~[\ttt 2^{i-1}, \ttt 2^i]} \\
        &\qquad{} \underbrace{-(\kappa - 1) \ttt 2^{i-1} \, \ttt 2^i + \frac{\kappa-1}{2} \, {(\ttt 2^i)}^2}_{\text{integral on}~[\ttt 2^i, \ttt 2^{i+1}]} \\
        &\qquad{} \underbrace{+ (\kappa -1) \ttt 2^{i-1} \, \frac{1}{4} \ttt 2^{i+1} - \frac{\kappa-1}{2} \, \bigl( \frac{1}{4} \ttt 2^{i+1} \bigr)^2}_{\text{integral on}~[\ttt 2^{i+1}, \frac{5}{4} \ttt 2^{i+1}]} \\
        &= \frac{\kappa-1}{\kappa} \, \{-2^{2i-3} - 2^{2i-1} + 2^{2i-1} + 2^{2i-2} - 2^{2i-3}\} \\
        &= 0\,,
    \end{align*}
    as desired.
\end{proof}

We also need a lemma showing that each probability distribution $p_i \propto \exp(-V_i)$ places a substantial amount of mass on the interval $(\ttt 2^{i-2}, \ttt 2^{i-1}]$.

\begin{lemma}\label{lem:low_bd_lem2}
    For each $i \in [m]$,
    \begin{align*}
        p_i\bigl((\ttt 2^{i-2}, \ttt 2^{i-1}]\bigr)
        \ge \frac{1}{32}\,.
    \end{align*}
\end{lemma}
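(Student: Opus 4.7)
The plan is to bound $p_i(J_i)$ directly, where $J_i := (\ttt 2^{i-2}, \ttt 2^{i-1}]$, by estimating the numerator $N_i := \int_{J_i} \exp(-V_i)$ and the normalizing constant $Z_i := \int_{\mathbb{R}} \exp(-V_i)$ separately. The crucial structural fact to exploit is that $V_i''(x) = 1$ on $[-\ttt 2^{i-1}, \ttt 2^{i-1}]$, which together with $V_i(0) = V_i'(0) = 0$ pins down the explicit formula $V_i(x) = x^2/2$ throughout this central interval.

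For the numerator, I would note that on $J_i$ we have $V_i(x) = x^2/2 \le 2^{2i-2}/(2\kappa)$, and since $i \le m$ the defining inequality \eqref{eq:lower_bd_defn_m} yields $\exp(-V_i(x)) \ge 1/2$ throughout $J_i$. Integrating over the interval of length $\ttt 2^{i-2}$ gives $N_i \ge \ttt 2^{i-3}$.

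For the denominator, I would split the positive half-line (using evenness of $V_i$) into three pieces: the central region $[0, \ttt 2^{i-1}]$, the transition region $[\ttt 2^{i-1}, \ttt 2^i]$ where $V_i'' = \kappa$, and the tail $[\ttt 2^i, \infty)$. The central region contributes at most $\ttt 2^{i-1}$ since the integrand is bounded by $1$. In the transition region, the fact that $V_i'(\ttt 2^{i-1}) = \ttt 2^{i-1} \ge 0$ together with $V_i'' = \kappa$ gives $V_i(x) \ge (\kappa/2)(x-\ttt 2^{i-1})^2$, and extending the integral to $[\ttt 2^{i-1}, \infty)$ produces a Gaussian contribution of at most $\ttt \sqrt{\pi/2}$. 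In the tail, integrating $V_i'' = \kappa$ across the transition region gives $V_i'(\ttt 2^i) \ge \sqrt{\kappa}\cdot 2^{i-1}$ and $V_i(\ttt 2^i) \ge 2^{2i-3}$, yielding an exponential tail bound of at most $\exp(-2^{2i-3})\,\ttt/2^{i-1}$. Doubling by symmetry gives $Z_i \le 2\ttt\bigl[2^{i-1} + \sqrt{\pi/2} + \exp(-2^{2i-3})/2^{i-1}\bigr]$.

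Taking the ratio, I would obtain $p_i(J_i) \ge 2^{i-3}/\bigl(2^i + 2\sqrt{\pi/2} + 2\exp(-2^{2i-3})/2^{i-1}\bigr)$. The hard part will be verifying this exceeds $1/32$ uniformly in $i$: the worst case is $i=1$, where all three terms in the denominator are of comparable magnitude, and a direct numerical check gives approximately $0.044 > 1/32 \approx 0.031$. For $i \ge 2$, the central term $2^i$ dominates and the ratio tends to $1/8$, so the bound holds with ample margin.
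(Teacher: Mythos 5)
Your proof is correct and follows the same overall strategy as the paper's: lower-bound the numerator $\int_{J_i}\exp(-V_i)$ by $\ttt 2^{i-3}$ using the choice of $m$, and upper-bound the normalizing constant $Z_i$ by splitting $[0,\infty)$ into pieces, using convexity and the tail estimate of \autoref{lem:gaussian_tails} beyond $\ttt 2^i$. The only place you diverge is in the treatment of the central region. The paper lumps $[0,\ttt 2^i]$ together and bounds the integrand trivially by $1$, getting $\int_0^{\ttt 2^i}\exp(-V_i)\le \ttt 2^i$; combined with the tail bound $\le \ttt$ this yields $p_i(J_i)\ge 2^{i-3}/\bigl(2\,(2^i+1)\bigr)$, which satisfies $\ge 1/32$ by the one-line inequality $2^i+1\le 2\cdot 2^i$. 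You instead split off the transition interval $[\ttt 2^{i-1},\ttt 2^i]$ and exploit $V_i''=\kappa$ there to get a Gaussian bound $\ttt\sqrt{\pi/2}$, and you also retain the constant $V_i(\ttt 2^i)\ge 2^{2i-3}$ in the tail. This produces a tighter estimate (limiting ratio $1/8$ rather than $1/16$), but the resulting expression no longer admits a clean algebraic verification and forces you to resort to a numerical check at $i=1$. Both routes are valid; the paper's coarser split is worth noting simply because it trades a small constant factor for a purely algebraic argument.
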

\begin{proof}
    According to the definition of $p_i$, we have
    \begin{align*}
        p_i\bigl((\ttt 2^{i-2}, \ttt 2^{i-1}]\bigr)
        &= \frac{\int_{\ttt 2^{i-2}}^{\ttt 2^{i-1}} \exp(-x^2/2) \, \D x}{Z_{p_i}}\,, \qquad Z_{p_i} := \int_\R \exp(-V_i)\,.
    \end{align*}
    Recalling that $m$ is chosen so that $\exp(-x^2/2) \ge 1/2$ whenever $\abs x \le \ttt 2^{m-1}$ (see~\eqref{eq:lower_bd_defn_m}), we can conclude that
    \begin{align*}
        \int_{\ttt 2^{i-2}}^{\ttt 2^{i-1}} \exp\bigl( - \frac{x^2}{2}\bigr) \, \D x
        &\ge \frac{1}{2} \, \ttt 2^{i-2}\,.
    \end{align*}
    For the normalizing constant, observe that
    \begin{align*}
        \int_0^\infty \exp(-V_i)
        &= \int_0^{\ttt 2^i} \exp(-V_i) + \int_{\ttt 2^i}^\infty \exp(-V_i)
        \le \ttt 2^i + \int_{\ttt 2^i}^\infty \exp(-V_i)\,.
    \end{align*}
    Since $V_i'' = \kappa$ on $[\ttt 2^{i-1}, \ttt 2^i]$, it follows that $V'(\ttt 2^i) \ge \kappa^{\frac{1}{2}} 2^{i-1}$, and so
    \begin{align*}
        V_i(x)
        &\ge \kappa^{\frac{1}{2}} 2^{i-1} \, (x-\ttt 2^i) + \frac{{(x - \ttt 2^i)}^2}{2}\,, \qquad x \ge \ttt 2^i\,.
    \end{align*}
    Therefore,
    \begin{align*}
        \int_{\ttt 2^i}^\infty \exp(-V_i)
        &\le \int_{\ttt 2^i}^\infty \exp\bigl(-\kappa^{\frac{1}{2}} 2^{i-1} \, (x-\ttt 2^i) - \frac{{(x - \ttt 2^i)}^2}{2}\bigr) \, \D x
        \le \frac{1}{\kappa^{\frac{1}{2}} 2^{i-1}}
        \le \frac{1}{\sqrt\kappa}\,,
    \end{align*}
    where we applied a standard tail estimate for Gaussian densities (\autoref{lem:gaussian_tails}).
    Putting it together,
    \begin{align*}
        p_i\bigl((\ttt 2^{i-2}, \ttt 2^{i-1}]\bigr)
        &\ge \frac{2^{i-3}}{2\,(2^i + 1)}
        \ge \frac{1}{32}\,,
    \end{align*}
    which proves the result.
\end{proof}

\subsection{Lower bound via Fano's inequality}

In this section, we use the densities $\{p_i\}_{i=1}^m$ constructed in the previous section together with Fano's inequality from information theory in order to prove the lower bound.

\begin{proof}[Proof of~\autoref{thm:general_lower}]
    Let $Z \sim \unif([m])$ be an index chosen uniformly at random.
    Suppose that an algorithm makes $n$ queries to the oracle for $p_Z$, and given $Z = i$, outputs a sample $Y$ whose law $q_i$ is at total variation distance at most $\frac{1}{64}$ from $p_i$.
    %\patrik{We might want to be more precise, I think the guarantee is that $TV(q_i := \mathcal{L}(Y|Z=i), p_i) \leq 1/100$ (?)}. 
  In light of Lemma~\ref{lem:low_bd_lem2}, a good candidate estimator for $Z$ from the observation of $Y$  is given by
    \begin{align*}
        \widehat Z
        &:=\{k \in \N : Y \in (\ttt 2^{k-2}, \ttt 2^{k-1}]\}\,.
    \end{align*}
    On the one hand, the probability that the estimator is correct is bounded by
    \begin{align}
        \Pr\{\widehat Z = Z\}
        &= \frac{1}{m} \sum_{i=1}^m \Pr\{\widehat Z = i \mid Z = i\}
        = \frac{1}{m} \sum_{i=1}^m \Pr\{Y \in (\ttt 2^{i-2}, \ttt 2^{i-1}] \mid Z = i\} \nonumber \\
        &= \frac{1}{m} \sum_{i=1}^m q_i\bigl((\ttt 2^{i-2}, \ttt 2^{i-1}]\bigr)
        \ge \frac{1}{m} \sum_{i=1}^m p_i\bigl((\ttt 2^{i-2}, \ttt 2^{i-1}]\bigr) - \frac{1}{64}
        \ge \frac{1}{64}\,, \label{eq:prob_correct_guess}
    \end{align}
    where the last inequality uses~\autoref{lem:low_bd_lem2}.
    
    On the other hand, we can lower bound $\Pr\{\widehat Z \ne Z\}$ using Fano's inequality. Let $x_1,\dotsc,x_n$ denote the query points of the algorithm, and let $W_i$ be a shorthand for the triple $(V_i, V_i', V_i'')$. We will first prove the lower bound for deterministic algorithms, i.e., assuming that each query point $x_j$ is a deterministic function of the previous query points and query values.
    % As a shorthand, write $W_i:=(V_i, V_i', V_i'')$. Let $\mathcal{A}$ be any algorithm that, based on $n$ queries $x_1,\dotsc,x_n$ to the oracle, constructs an estimator of $Z$ denoted $\mathcal{A}(n)$. Let $\xi$ be the random seed of the algorithm $\mathcal{A}$, so that its queries may be written as
% \begin{align*}
%     x_1 &= f_1(\xi) \\
%     x_2 &= f_2(\xi, (x_1, W_Z(x_1))) \\
%     &\vdots \\
%     x_n &= f_n(\xi, (x_i, W_Z(x_i))_{i < n}) \\
%     \mathcal{A}(n) &= f_{n+1}(\xi, (x_i, W_Z(x_i))_{i \in [n]})
% \end{align*}
%     for deterministic, measurable functions $f_1, \dots, f_{n+1}$.
    Since
    \begin{equation*}
        Z \to \{x_j, W_Z(x_j), \; j \in [n]\} \to \widehat Z
    \end{equation*}
    forms a Markov chain, Fano's inequality~\citep{coverthomas2006infotheory} yields
    \begin{align*}
        \Pr\{\widehat Z \ne Z\}
        &\ge 1 - \frac{I({\{x_j, W_Z(x_j)\}}_{j\in [n]} ; Z) + \log 2}{\log m}\,,
    \end{align*}
    where $I$ denotes the mutual information. By the chain rule for mutual information \citep{coverthomas2006infotheory},
    \begin{equation*}
        I\bigl({\{x_j, W_Z(x_j)\}}_{j\in [n]} ; Z\bigr)
        = \sum_{j=1}^n I\bigl(x_j, W_Z(x_j); Z \bigm\vert x_1, W_Z(x_1),\dotsc,x_{j-1}, W_Z(x_{j-1})\bigr)\,.
    \end{equation*}
    Observe that, conditioned on ${\{x_i, W_Z(x_i)\}}_{i=1}^{j-1}$, the query point $x_j$ is deterministic. Also, from the construction of the family of potentials, we know that $W_Z(x_j) = W_1(x_j)$ if $x_j \le \ttt 2^{Z-1}$, and $W_Z(x_j) = W_m(x_j)$ if $x_j \ge \frac{5}{4} \ttt 2^{Z+1}$. It yields that:
    \begin{itemize}
        \item for $Z \le \log_2(\frac{4}{5}\sqrt \kappa x_j)-1$, $W_Z(x_j)$ takes a unique value given by $W_m(x_j)$,
        \item for $Z \ge \log_2(\sqrt \kappa x_j)+1$, $W_Z(x_j)$ takes a unique value given by $W_1(x_j)$,
    \end{itemize} 
    and otherwise, $Z$ lives in an interval of size at most $\log_2(\sqrt \kappa x_j)+1-(\log_2(\frac{4}{5}\sqrt \kappa x_j)-1) \le 2+\log_2(5/4)$ which covers at most three integers, say $z_0-1, z_0, z_0+1$. 
    Hence, the conditional distribution of $W_Z(x_j)$ can be supported on at most $5$ points given respectively by \[ W_1(x_j), W_m(x_j), W_{z_0-1}(x_j), W_{z_0}(x_j),~\text{and}~ W_{z_0+1}(x_j)\,. \] Since the mutual information is upper bounded by the conditional entropy of $W_Z(x_j)$, we can conclude
    \begin{align*}
        I\bigl({\{x_j, W_Z(x_j)\}}_{j\in [n]} ; Z\bigr)
        &\le n\log 5\,.
    \end{align*}
    Substituting this into Fano's inequality yields
    \begin{align}\label{eq:low_bd_deterministic}
        \Pr\{\widehat Z \ne Z\}
        &\ge 1 - \frac{n\log 5 + \log 2}{\log m}\,.
    \end{align}
    
    In general, if the algorithm is randomized, then we can apply the inequality~\eqref{eq:low_bd_deterministic} conditioned on the random seed $\xi$ of the algorithm, since $\xi$ is independent of $Z$. It yields
    \begin{align*}
        \Pr\{\widehat Z \ne Z \mid \xi\}
        &\ge 1 - \frac{n\log 5 + \log 2}{\log m}\,,
    \end{align*}
    and upon taking expectations we see that~\eqref{eq:low_bd_deterministic} holds for randomized algorithms as well.
    
    % Putting it all together, we have
    % \begin{align*}
    %     \Pr(\widehat Z \ne Z) &\geq \min\limits_{\mathcal{A}} \Pr(\mathcal{A}(n) \ne Z) = \min\limits_{\mathcal{A}} \E \Pr(\mathcal{A}(n) \neq Z \,|\,\xi) \\
    %     &\geq 1 - \frac{n \log 5 + \log 2}{\log m}.
    % \end{align*}
    Combined with~\eqref{eq:prob_correct_guess}, we obtain $n \gtrsim \log m \gtrsim \log\log\kappa$ as desired. 
\end{proof}

\section{Proof of the upper bound}\label{scn:stronglylc_upper}

Let $p$ be the target distribution and let $\tilde p = p Z_p$ denote the unnormalized distribution which we access via oracle queries. We recall our preprocessing steps: we assume that the query values take the form $\tilde p(x) = \exp(-V(x))$, with $V(0) = V'(0) = 0$ and $V$ satisfying~\eqref{eq:smooth_strong_cvx}. This is without loss of generality because we can query $\tilde p(0)$ and replace subsequent queries $\tilde p(x)$ with $\tilde p(x)/\tilde p(0)$, thereby normalizing $V$ to satisfy $V(0) = 0$. By rescaling the distribution, we can assume that $1 \le V'' \le \kappa$. Also, we can assume that the target distribution is only supported on the positive reals $\R_+$, because we can then construct an upper envelope on all of $\R$ by repeating our algorithm on the negative reals, which only doubles the number of queries and does not change the complexity.

\begin{proof}[Proof of Theorem~\ref{thm:upper_bd}]
Our goal is to use the oracle queries to construct an upper envelope $\tilde q$ that satisfies $\tilde q \ge \tilde p$, and $Z_q \lesssim Z_p$, where
\begin{align*}
    Z_p
    &:= \int_\R \tilde p\,, \qquad Z_q := \int_\R \tilde q
\end{align*}
are the normalizing constants. The guarantees of Theorem~\ref{thm:upper_bd} will then follow from standard results on rejection sampling.
For completeness, we state and prove the relevant result as~\autoref{thm:rejection sampling} in Appendix~\ref{scn:auxiliary}.

Let $i_0$ denote the smallest integer such that  $V(2^{i_0}/\sqrt \kappa) \ge 1/2$. Note that $x^2/2 \le V(x) \le \kappa x^2/2$ implies that $0 \le i_0 \le  (\log_2 \kappa)/2$.
% $i_0 \in \{0,1,\dotsc,\lceil \frac{1}{2} \log_2 \kappa\rceil\}$ such that $V(2^{i_0}/\sqrt \kappa) \ge 1/2$.
% Such an index must exist; indeed, since $x^2/2 \le V(x) \le \kappa x^2/2$, the former inequality shows that $i_0 \le \lceil \frac{1}{2} \log_2 \kappa\rceil$ and the latter inequality shows that $i_0 \ge 0$.
Using binary search over an array of size $\mc O(\log \kappa)$, we can find $i_0$ using only $\mc O(\log \log \kappa)$ queries to $\tilde p$.

Let $x_0 := 2^{i_0}/\sqrt \kappa$.
We first claim that
\begin{equation}
\label{eq:pr:norm_const_tildep}
        \int_0^{x_0} \tilde p \gtrsim \, x_0\,.
\end{equation}
When $i_0 = 0$, this holds because
\begin{align*}
    \int_0^{x_0} \tilde p
    &= \int_0^{1/\sqrt \kappa} \exp(-V)
    \ge \int_0^{1/\sqrt \kappa} \exp\bigl( - \frac{\kappa x^2}{2} \bigr) \, \D x\ge \frac{1}{3\sqrt\kappa}=\frac{x_0}{3}\,.
    % \gtrsim\, x_0\,.
\end{align*}
When $i_0 > 0$, this holds because, by definition of $i_0$, we have $V(x_0/2) \le 1/2$, and so
\begin{align*}
    \int_0^{x_0} \tilde p
    &\ge \int_0^{x_0/2} \exp(-V)
    \gtrsim \, x_0\,.
\end{align*}

\medskip

Next,  define the upper envelope as follows:
\begin{align*}
    \tilde q(x) = \begin{cases} 1\,, & x \le x_0\,, \\ \exp\{- (x-x_0)/(2x_0) - {(x-x_0)}^2/2\}\,, & x > x_0\,. \end{cases}
\end{align*}
To see that $\tilde q \ge \tilde p$ and hence that $\tilde q$ is a valid upper envelope, observe first that  since $\tilde p(0) = 1$, and $\tilde p$ is decreasing, we get that $\tilde p(x)\le 1=\tilde q(x)$ for all $x \in [0, x_0]$. 

Next, if $x > x_0$, using the fact that $V$ is convex and $V(x_0)\ge 1/2$ by the definition of $x_0$,
\begin{align*}
    V'(x_0)
    &\ge \frac{V(x_0) - V(0)}{x_0}
    \ge \frac{1}{2x_0}\,.
\end{align*}
Hence, for any $x > x_0$ we have
\begin{align*}
    V(x)
    &\ge V(x_0) + V'(x_0) \, (x-x_0) + \frac{1}{2} \, {(x-x_0)}^2 \\
    &\ge \frac{1}{2x_0} \, (x-x_0) + \frac{1}{2} \, {(x-x_0)}^2\,.
\end{align*}
It implies that $\tilde p(x)\le \tilde q(x)$ also for the tail $x>x_0$.

To complete the proof, we  show that $Z_q \lesssim Z_p$. In light of~\eqref{eq:pr:norm_const_tildep} it is sufficient to show that $Z_q \lesssim x_0$. To see this, observe that by~\autoref{lem:gaussian_tails}, we have
\begin{align*}
 Z_q=  \int_0^{x_0} \tilde q+ \int_{x_0}^\infty \tilde q \le x_0+\int_{x_0}^\infty \exp\bigl( - \frac{1}{2x_0} \, (x-x_0) + \frac{1}{2} \, {(x-x_0)}^2\bigr) \, \D x
    \le 3x_0\,.
\end{align*}
This completes the proof.
\end{proof}

\section{Proof for the hit-and-run algorithm}\label{sec:proof of hit and run}

In this section, we prove Proposition~\ref{prop:hit-and-run}.

Let $p \propto \exp(-V)$ where $V:\R^d \to \R$ is $1$-strongly convex and $\kappa$-smooth with $V(0) = \nabla V(0) = 0$. We are given a line $\ell = \{x^\star + \lambda u \,:\, \lambda \in \R\}$ where $x^\star$ is the closest point of $\ell$ to the origin and $u$ is a unit vector. Note that
\begin{equation*}
    1 \leq u^\T \nabla^2 V(x^\star + \lambda u) u  = \frac{\D^2 V(x^\star + \lambda u)}{\D \lambda^2} \leq \kappa
\end{equation*}
so that $V$ restricted to $\ell$ is also $1$-strongly convex and $\kappa$-smooth. 

First, we need to find the minimum of $V$ on $\ell$. To that end, we restrict the line $\ell$ to the subset of points $x^\star+ \lambda u$ with 
\begin{equation}\label{eq:bd_on_lambda}
    |\lambda| \leq  2 \kappa \,|x^\star|_2\,,
\end{equation}
where  we write $\abs\cdot_2$ for the Euclidean norm on $\R^d$.

To prove~\eqref{eq:bd_on_lambda}, note that by strong convexity of $V$ on $\ell$, if $x^\star+\lambda u$ is a minimizer of $V$ on $\ell$, then we must have
\begin{equation*}
    \lambda \nabla V(x^\star)^\T u + \frac{1}{2}\, \lambda^2 \leq V(x^\star + \lambda u) - V(x^\star) \le 0\,.
\end{equation*}
% first that the right-hand side of~\eqref{eq:bd_on_lambda} follows from the $\kappa$-smoothness of $V$. To establish the 

% Note that
% \begin{equation*}
%     1 \leq u^\T \nabla^2 V(x^\star + \lambda u) u  = u^\T \frac{\D^2 V(x^\star + \lambda u)}{\D \lambda^2} u \leq \kappa
% \end{equation*}
% so that $V$ restricted to $\ell$ is also $1$-strongly convex and $\kappa$-smooth. First, we need to find the minimum of $V$ on $\ell$. For 
% for any $\lambda \in \R$
% \begin{equation*}
%     \lambda \nabla V(x^\star)^\T u + \frac{1}{2}\, \lambda^2 \leq V(x^\star + \lambda u) - V(x^\star)\,.
% \end{equation*}
% For $x^\star+\lambda u$ to be  minimizer, we need $V(x^\star + \lambda u) < V(x^\star)$ we certainly need the LHS of the above display to be smaller than $0$, which implies the left-hand side of the following inequality
The above yields $ |\lambda| \leq 2\,|\nabla V(x^\star)^\T u|$. Moreover, by $\kappa$-smoothness of $V$ we have 
$$
|\nabla V(x^\star)^\T u| \le |\nabla V(x^\star)|_2 \le \kappa |x^\star|_2\,,
$$
which completes the proof of~\eqref{eq:bd_on_lambda}.
% \begin{equation}\label{eq:bd_on_lambda}
%     |\lambda| \leq 2\,|\nabla V(x^\star)^\T u| \,.
% \end{equation}
% where the second in
% The right-hand side of~\eqref{eq:bd_on_lambda} simply follows from the $\kappa$-smoothness of $V$. 
% Here, we write $\abs\cdot_2$ for the Euclidean norm on $\R^d$.

%We now restrict our attention to $\lambda$ satisfying~\eqref{eq:bd_on_lambda}. 
To ease notation, from here on we write $V$ for its one-dimensional restriction to $\ell$. We modify our rejection sampling algorithm from~\autoref{thm:upper_bd} so that it works when (i) the minimizer $m \in \R$ of $V$ is only known approximately and (ii) the value $V(m)$ is known only approximately. To make this precise, suppose that we know $a,b \in \R$ such that $a \leq m \leq b$. By~\eqref{eq:bd_on_lambda}, such $a,b$ can be found using $\mathcal{O}(\log(\kappa \, |x^\star|_2 / (b-a)))$ queries via bisection. Strong convexity and smoothness give us
\begin{align}
    V(m) + \frac 12\, (b-m)^2 &\leq V(b) \leq V(m) + \frac{\kappa}{2}\,(b-m)^2\,,\label{eqn:approx mode bound1} \\
    V(m) + \frac 12 \,(m-a)^2 &\leq V(a) \leq V(m) + \frac{\kappa}{2}\,(m-a)^2\,.\label{eqn:approx mode bound2}
\end{align}
In particular, 
\begin{equation*}
    V(a) \lor V(b) - \frac{\kappa}{2} \,(b-a)^2 \leq V(m) \leq V(a) \land V(b)\,.
\end{equation*}
Relabel $V - (V(a)\lor V(b))$ as $V$, so that we may assume 
\begin{equation*}
    -\frac{\kappa}{2}\,(b-a)^2 \leq V(m) \leq 0\,.
\end{equation*}
Assume from here on that $\kappa\,(b-a)^2/2 = 1$ (this means we used $\mathcal{O}(\log(\kappa \,|x^\star|_2))$ queries). Define
\begin{align*}
    i_b &:= \min\bigl\{i\in\N\,:\,V\bigl(b+\frac{2^i}{\sqrt{\kappa}}\bigr) \geq 3\bigr\}\,, \\
    i_a &:= \min\bigl\{i\in\N\,:\,V\bigl(a - \frac{2^i}{\sqrt{\kappa}}\bigr) \geq 3 \bigr\}\,.
\end{align*}
Let $x_b = b+2^{i_b}/\sqrt{\kappa}$ and $x_a = a-2^{i_a}/\sqrt{\kappa}$. Note that $1\le i_a, i_b \lesssim \log \kappa$. Indeed, for the lower bound observe that by \eqref{eqn:approx mode bound1}, we have
\begin{equation*}
    V\bigl(b+\frac{1}{\sqrt{\kappa}}\bigr) \leq V(m) + \frac{\kappa}{2} \,\bigl(b+\frac{1}{\sqrt{\kappa}} - a\bigr)^2 \leq 0 + 1 + \frac 12 + \sqrt{2} < 3\,, 
\end{equation*}
so that $i_b\ge 1$.  

For the upper bound, we have
\begin{equation*}
    V\bigl(b + \frac{2^{\frac 12 (\log_2 \kappa + 3)}}{\sqrt{\kappa}}\bigr) \geq V(m) + \frac 12 \, \bigl(b+\frac{2^{\frac 12 (\log_2 \kappa + 3)}}{\sqrt{\kappa}}-m\bigr)^2 \geq -1 + \frac{2^{\log_2 \kappa + 2}}{\kappa} = 3\,.
\end{equation*}
It yields $i_b \lesssim \log \kappa$. Similarly \eqref{eqn:approx mode bound2} yields the desired bounds for $i_a$.

Thus, $i_a, i_b$ may be found via binary search in $\mathcal{O}(\log\log\kappa)$ queries to $\tilde p$. 

We now move to the definition of the upper envelope $\tilde q$. To that end, note first that 
\begin{equation*}
    V'(x_b) \geq \frac{V(x_b) - V(m)}{x_b-m} \geq \frac{3}{x_b - a}
\end{equation*}
so that for $x \geq x_b$ we can write
\begin{align*}
    V(x) &\geq V(x_b) + V'(x_b)(x-x_b) + \frac 12\, (x-x_b)^2 \\
    &\geq 3 + 3\, \frac{x-x_b}{x_b-a} + \frac 12\, (x-x_b)^2\,.
\end{align*}
Similarly, we also have
\begin{equation*}
    V(x) \geq 3 + 3\, \frac{x - x_a}{x_a - b} + \frac 12\, (x-x_a)^2
\end{equation*}
for $x \leq x_a$. We are ready for the definition of $\tilde{q}$. 
\begin{equation*}
    \tilde{q}(x) ;= \begin{cases}
    e\,, &\text{for } x \in [x_a, x_b]\,, \\
    \exp(- 3 - 3\, \frac{x-x_b}{x_b - a} - \frac 12\, (x-x_b)^2)\,, &\text{for } x \geq x_b\,, \\
    \exp(- 3 - 3 \,\frac{x-x_a}{x_a - b} - \frac 12 \,(x-x_a)^2)\,, &\text{for } x \leq x_a\,.
    \end{cases}
\end{equation*}
By our above calculations we know that $\tilde{p} \leq \tilde{q}$ on all of $\R$. We further have
\begin{equation*}
   Z_p\ge  \int_{x_a}^{x_b} \tilde{p} \geq \int_{a - 2^{i_a-1}/\sqrt{\kappa}}^{b + 2^{i_b-1}/\sqrt{\kappa}} \exp(-V) \geq \frac{\exp(-3)}{2}\,(x_b-x_a).
\end{equation*}
Thus, to conclude, it suffices to show that $Z_q = \int_\R \tilde{q} \lesssim x_b-x_a$. This follows by~\autoref{lem:gaussian_tails}.

Given that the chain is at point $x_t$ at some time $t$, we have described an algorithm that constructs an efficient rejection envelope with number of queries bounded by
\begin{equation*}
    \mc O\bigl(\log(\kappa\, |x_t|_2) \vee 1 + \log\log\kappa\bigr)\,. 
\end{equation*}
To bound the amortized query complexity, it remains to control the time average of $|x_t|_2$. To that end, recall that the Hit-and-Run chain is geometrically ergodic by \cite{lovasz2003hit} so  that, 
\begin{equation*}
    \frac 1T \sum\limits_{t=1}^T \log(\kappa \,|x_t|_2) \lor 1 \stackrel{\text{a.s.}}{\longrightarrow} \E_{x \sim p} \log(\kappa\,|x|_2) \lor 1.
\end{equation*}
 By Jensen's inequality we further have
\begin{align*}
    \E_{x \sim p} \log(\kappa\,|x|_2) \lor 1 &\lesssim \log(\kappa \E_{x \sim p} |x|_2)\lor 1 \lesssim \log(\kappa d)\,,
\end{align*}
since $p$ is $1$-sub-Gaussian. Noting that $\log\log \kappa \leq \log(\kappa d)$, the result follows.

\section{Auxiliary results}\label{scn:auxiliary}

The following result on rejection sampling is standard, and we include it for the sake of completeness.

\begin{theorem}\label{thm:rejection sampling}
    Suppose we have query access to the unnormalized target $\tilde{p} = p Z_p$ supported on $\ms X$, and that we have an upper envelope $\tilde q \ge \tilde p$.
    Let $q$ denote the corresponding normalized probability distribution and write $Z_q$ for the normalizing constant, i.e., $\tilde q = q Z_q$.
    % \begin{enumerate}
    %     \item We can sample from $q$ efficiently.
    %     \item $p(x)/q(x) \leq M$ for all $x \in \ms X$, where $M$ is finite and known. 
    %     \item $Z_p/Z_q \leq C$ where $C$ is finite and known.
    % \end{enumerate}
    Then, rejection sampling with acceptance probability $\tilde{p}/\tilde q$ outputs a point distributed according to $p$, and the number of samples drawn from $q$ until a sample is accepted follows a geometric distribution with mean $Z_q/Z_p$.
\end{theorem}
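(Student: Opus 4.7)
The plan is to unfold the definition of a single rejection-sampling step, compute the marginal acceptance probability, and then apply Bayes' rule to identify the law of an accepted sample. Concretely, one iteration of rejection sampling draws $X \sim q$ and, independently, $U \sim \unif([0,1])$, and accepts $X$ if and only if $U \le \tilde p(X)/\tilde q(X)$; the hypothesis $\tilde q \ge \tilde p$ guarantees that this ratio lies in $[0,1]$ so the acceptance rule is well-defined.

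First I would compute the probability that a single proposal is accepted, by conditioning on $X$ and integrating:
\begin{align*}
    \Pr\{\text{accept}\}
    &= \int_{\ms X} \frac{\tilde p(x)}{\tilde q(x)} \, q(x) \, \D x
    = \frac{1}{Z_q} \int_{\ms X} \tilde p(x) \, \D x
    = \frac{Z_p}{Z_q}\,,
\end{align*}
where I used $q = \tilde q/Z_q$. Since the successive proposals $(X_i, U_i)$ are i.i.d., the number of draws until the first acceptance is a geometric random variable with success probability $Z_p/Z_q$, and hence with mean $Z_q/Z_p$, which is exactly the second claim of the theorem.

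Next I would identify the law of an accepted sample. For any measurable $A \subseteq \ms X$, Bayes' rule for a single proposal gives
\begin{align*}
    \Pr\{X \in A \mid \text{accept}\}
    &= \frac{\Pr\{X \in A,\ \text{accept}\}}{\Pr\{\text{accept}\}}
    = \frac{Z_q}{Z_p} \int_A \frac{\tilde p(x)}{\tilde q(x)} \, q(x) \, \D x
    = \frac{1}{Z_p} \int_A \tilde p(x) \, \D x
    = p(A)\,.
\end{align*}
Because each rejected proposal is simply discarded and a fresh independent proposal is used, the law of the eventually accepted sample is the same as the conditional law of a single proposal given acceptance, namely $p$. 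This completes both claims.

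There is essentially no main obstacle; the only subtlety worth flagging is that $\tilde p/\tilde q \le 1$ must hold $q$-almost everywhere for the acceptance step to be a valid probability, which is exactly the envelope hypothesis $\tilde q \ge \tilde p$. I would state the proof at this level of detail since the result is standard and is invoked only to justify the guarantees in Theorem~\ref{thm:upper_bd} (an exact sample after a geometric, hence $\mc O(Z_q/Z_p) = \mc O(1)$ expected number of queries, and a total-variation error of at most $(1 - Z_p/Z_q)^N$ after truncating to $N$ iterations, which is $\le \varepsilon$ for $N = \mc O(\log(1/\varepsilon))$).
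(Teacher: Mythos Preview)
Your proof is correct and follows essentially the same approach as the paper: both compute the single-step acceptance probability as $Z_p/Z_q$ and then identify the law of the output as $p$. The only cosmetic difference is that the paper partitions explicitly over the number $n$ of rejections and sums the resulting geometric series, whereas you condition on acceptance in a single step and invoke the i.i.d.\ structure of the proposals---these are equivalent and equally standard.
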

\begin{proof}
Since $\tilde q$ is an upper envelope for $\tilde p$, then $\tilde p(X)/\tilde q(X) \le 1$ is a valid acceptance probability.
Clearly, the number of rejections follows a geometric distribution. The probability of accepting a sample is given by
\begin{align*}
    \Pr(\text{accept})
    &= \int_{\ms X} \frac{\tilde p(x)}{\tilde q(x)} \, q(\D x)
    = \frac{Z_p}{Z_q} \int_{\ms X} p(\D x)
    = \frac{Z_p}{Z_q}\,.
\end{align*}
Let $X_1, X_2, X_3\dots$ be a sequence of i.i.d. samples from $q$ and let $U_1, U_2, U_3\dots$ be i.i.d. $\unif[0,1]$. Let $A \subseteq \ms X$ be a measurable set, and let $X$ be the output of the rejection sampling algorithm. Partitioning by the number of rejections, we may write
\begin{align*}
    \Pr(X \in A) &= \sum\limits_{n=0}^\infty \Pr\Bigl(X_{n+1} \in A,\; U_i > \frac{\tilde{p}(X_i)}{\tilde{q}(X_i)}\, \forall\,i\in [n],\; U_{n+1} \leq \frac{\tilde{p}(X_{n+1})}{\tilde{q}(X_{n+1})}\Bigr) \\
    &= \sum\limits_{n=0}^\infty \Pr\Bigl(X_{n+1} \in A,\, U_{n+1} \leq \frac{\tilde{p}(X_{n+1})}{\tilde{q}(X_{n+1})}\Bigr) \, \Pr\Bigl(U_1 > \frac{\tilde{p}(X_1)}{\tilde{q}(X_1)}\Bigr)^n \\
    &= \sum\limits_{n=0}^\infty \Bigl(\int_A \frac{\tilde{p}(x)}{\tilde{q}(x)} \, q(\D x)\Bigr) \Bigl(\int_{\ms X} \bigl(1 - \frac{\tilde{p}(x)}{ \tilde{q}(x)}\bigr) \, q(\D x)\Bigr)^n \\
    &= p(A) \, \frac{Z_p}{Z_q} \sum\limits_{n=0}^\infty \bigl(1 - \frac{Z_p}{Z_q}\bigr)^n
    = p(A)\,.
\end{align*}
\end{proof}

We also use the following elementary lemma about Gaussian integrals.

\begin{lemma}\label{lem:gaussian_tails}
    Let $a, x_0 > 0$.
    Then,
    \begin{align*}
        \int_{x_0}^\infty \exp\bigl( - a \, (x-x_0) - \frac{1}{2} \, {(x-x_0)}^2\bigr) \, \D x
        \le \frac{1}{a}\,.
    \end{align*}
\end{lemma}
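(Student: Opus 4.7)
The plan is to reduce the integral to a pure exponential tail by a translation and then discard the Gaussian factor by a pointwise bound. First I would substitute $y := x - x_0$ (with $\D y = \D x$), which transforms the integral into
\[
\int_0^\infty \exp\bigl(-ay - \tfrac{1}{2} y^2\bigr) \, \D y\,.
\]
This reduces the problem to bounding an integral over the positive half-line of a product of an exponential and a Gaussian factor.

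Next, since $y \ge 0$ throughout the new domain of integration, we have $\exp(-y^2/2) \le 1$. Therefore the integrand is pointwise dominated by $\exp(-ay)$, and
\[
\int_0^\infty \exp\bigl(-ay - \tfrac{1}{2} y^2\bigr) \, \D y \le \int_0^\infty \exp(-ay) \, \D y = \frac{1}{a}\,,
\]
where the last equality uses the hypothesis $a > 0$.

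There is no real obstacle here: the lemma simply packages the observation that the additional quadratic drag $-y^2/2$ only speeds up the exponential decay on $[0,\infty)$, so the estimate reduces to evaluating a single exponential integral. The positivity of $a$ is essential both to ensure that $1/a$ is finite and to guarantee convergence of the right-hand integral. A sharper bound could be obtained by completing the square and invoking a Mills-ratio estimate, but the crude bound above is sufficient for the applications in the proofs of Theorems~\ref{thm:upper_bd} and Proposition~\ref{prop:hit-and-run}, where the factor $1/a$ is exactly what is used.
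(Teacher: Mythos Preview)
Your proof is correct and is in fact more elementary than the paper's. After the same substitution $y = x - x_0$, the paper instead completes the square to write
\[
\int_0^\infty \exp\bigl(-ay - \tfrac{1}{2}y^2\bigr)\,\D y
= \sqrt{2\pi}\,\exp\bigl(\tfrac{a^2}{2}\bigr)\,\Pr(Z > a)\,,\qquad Z\sim\mathcal N(0,1)\,,
\]
and then applies the Mills ratio inequality $\Pr(Z>a) \le \tfrac{1}{a\sqrt{2\pi}}\exp(-a^2/2)$ to obtain $1/a$. Your route simply drops the Gaussian factor via the pointwise bound $\exp(-y^2/2)\le 1$ on $[0,\infty)$ and integrates $\exp(-ay)$ directly. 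The Mills-ratio approach is the one that could be sharpened (e.g.\ to $\tfrac{1}{a}-\tfrac{1}{a^3}$ type bounds), but since only the crude $1/a$ bound is needed downstream, your argument gets there with strictly less machinery and no appeal to an external inequality. You even note this tradeoff yourself in the last paragraph, which is exactly right.
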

\begin{proof}
    Completing the square,
    \begin{align*}
        \int_{x_0}^\infty \exp\bigl( - a \, (x-x_0) - \frac{1}{2} \, {(x-x_0)}^2\bigr) \, \D x
        &= \int_0^\infty \exp\bigl( - ax - \frac{1}{2} \, x^2\bigr) \, \D x \\
        & = \sqrt{2\pi} \exp\bigl( \frac{a^2}{2} \bigr) \Pr(Z>a)\,,
    \end{align*}
    where $Z \sim \mc N(0,1)$. 
    The result follows from the Mills ratio inequality~\citep{Gor41}.
\end{proof}

\end{document}